\newcommand{\Ran}{{\operatorname{Ran}}}
\newcommand{\RR}{{\mathbb{R}}}
\newcommand{\tz}{{\tilde{z}}}
\newcommand{\tv}{{\tilde{v}}}
\newcommand{\tF}{{\tilde{F}}}
\newcommand{\Vp} {\ensuremath{V_{\scriptscriptstyle X}}}%
\newcommand{\Vd} {\ensuremath{V_{\scriptscriptstyle Y}}}%
\newcommand{\Wp} {\ensuremath{W_{\scriptscriptstyle X}}}%
\newcommand{\Wd} {\ensuremath{W_{\scriptscriptstyle Y}}}%
\newcommand{\optpar} [2] []
  {\ensuremath{#2\ifempty{#1} {} {\!\left(#1\right)}}}
\newcommand{\optsub} [2] []
  {\ensuremath{#2\ifempty{#1} {} {_{#1}}}}
\newcommand{\optsuper} [2] []
  {\ensuremath{#2\ifempty{#1} {} {^{#1}}}}
\newlength{\boxwd}
\newlength{\boxht}
\newcommand{\ifempty} [1] {\ifthenelse{\equal {#1} {} }}
\newcommand{\dOmega}{\ensuremath{\partial\Omega}}
\newcommand{\Lp} [2] [] {\optpar[#1] {L^{#2}}}                   
\newcommand{\Lone} [1] [] {\Lp[#1]{1}}
\newcommand{\Ltwo} [1] [] {\Lp[#1]{2}}
\newcommand{\Linfty} [1] [] {\Lp[#1]{\infty}}
\newcommand{\Hk} [2] [] {\optpar[#1] {H^{#2}}}
\newcommand{\Hone} [1] [] {\Hk[#1]{1}}
\newcommand{\Hnegone} [1] [] {\Hk[#1]{-1}}
\newcommand{\Htwo} [1] [] {\Hk[#1]{2}}
\newcommand{\Hzerok} [2] [] {\optpar[#1] {H_0^{#2}}}
\newcommand{\Hzeroone} [1] [] {\Hzerok[#1]{1}}
\newcommand{\R} {\ensuremath{\mathbb{R}}}
\newcommand{\tf} {\tilde{f}}
\newcommand{\lap} {\Delta} 
\newcommand{\seminorm} [1] [\,] {\ensuremath{|{#1}|}}    
\newcommand{\semnrm} [2] [\,]
  {\ensuremath{\seminorm[{#1}]_{\scriptscriptstyle#2}}}
\newcommand{\nrm} [2] [\,]%
  {\ensuremath{|\!|{#1}|\!|_{{\scriptscriptstyle#2}}}}
\newcommand{\Lonenrm} [2] []%
  {\nrm[{#2}] {\Lone[#1]}}
\newcommand{\Ltwonrm} [2] []%
  {\nrm[{#2}] {\Ltwo[#1]}}
\newcommand{\Linftynrm} [2] []%
  {\nrm[{#2}] {\Linfty[#1]}}
\newcommand{\Honenrm} [2] []%
  {\nrm[{#2}] {\Hone[#1]}}
\newcommand{\Htwonrm} [2] []%
  {\nrm[{#2}] {\Htwo[#1]}}
\newcommand{\negnrm} [1] []%
  {\nrm[{#1}] {-1}}
\newcommand{\zeronrm} [1] []%
  {\nrm[{#1}] {0}}
\newcommand{\onenrm} [1] []%
  {\nrm[{#1}] {1}}
\newcommand{\twonrm} [1] []%
  {\nrm[{#1}] {2}}
\newcommand{\inftynrm} [1] []%
  {\nrm[{#1}] {\infty}}
\newcommand{\onesemnrm} [1] []%
  {\semnrm[{#1}] {1}}
\newcommand{\Honesemnrm} [2] []%
  {\semnrm[{#2}] {\Hone[#1]}}
\newcommand{\Htwosemnrm} [2] []%
  {\semnrm[{#2}] {\Htwo[#1]}}
\newtheorem{lem} {Lemma}
\newtheorem{prop} {Proposition}
\newtheorem{teo} {Theorem}
\def\qed{\unskip\nobreak\hfil\penalty50\hskip1.75em\null\nobreak\hfil
$\blacksquare$ {\parfillskip=0pt \finalhyphendemerits=0 \par}\medbreak}
\newcommand\capsize{\relax}
\title{Geometric aspects of  Ambrosetti-Prodi operators \\   with Lipschitz nonlinearities}
\author{Carlos Tomei and André Zaccur\vspace{0.5cm} \\
Departamento de Matemática, PUC-Rio\footnote{ R. Mq.  S. Vicente 225,
Rio de Janeiro 22451-900,  Brazil. email: tomei@mat.puc-rio.br.} \vspace{0.5cm} \\
}
\date{}
\begin{document}
\maketitle

\centerline{\it Dedicated to Bernhard, with affection and admiration}

\begin{abstract}
Let the function $u$ satisfy  Dirichlet boundary conditions on a bounded domain $\Omega$.
What happens to the critical set of the Ambrosetti-Prodi operator $F(u) = - \lap u - f(u)$ if the nonlinearity is only a Lipschitz map? It turns out that many properties which hold in the smooth case are preserved, despite of the fact that $F$ is not even differentiable at some points. In particular, a global Lyapunov-Schmidt decomposition of great convenience for numerical solution of $F(u) = g$ is still available.
\end{abstract}

\medbreak

{\noindent\bf Keywords:}  Semilinear elliptic equations, Ambrosetti-Prodi theorem.

\smallbreak

{\noindent\bf MSC-class:} 35B32, 35J91, 65N30.

\section{Introduction}\label{sec:intro}

A familiar set of hypotheses for the celebrated Ambrosetti-Prodi theorem is the following.
Let $\Omega \subset \RR^n$ be an open, bounded, connected domain with smooth boundary $\dOmega$ and denote by $ 0 < \lambda_1 < \lambda_2 \le \ldots $
 the eigenvalues of the free Dirichlet Laplacian $- \Delta$
on $\Omega$. Let $f: \RR \to \RR$ be a smooth, strictly convex function, with asymptotically linear derivative so that
\[\Ran \ f' = (a,b) \, ,  \quad a < \lambda_1 < b <\lambda_2 \,.\]

Under such hypotheses, the theorem states that the equation
\begin{equation}\label{eq:problem}
 F(u) =  - \lap u - f(u) = g, \quad u|_{\partial \Omega} = 0
\end{equation}
for, say, $g \in C^{0,\alpha}(\Omega)$,  has (exactly) zero, one or two solutions in $C^{2,\alpha}(\Omega)$.

\subsection{A first approach --- locating $C$ and $F(C)$}

In the original arguments (\cite{AP}, \cite{MM}), a fundamental role is played by the critical set $C$ of  $F: X = C^{2,\alpha}_D(\Omega) \to Y = C^{0,\alpha}(\Omega)$. Here $C^{2,\alpha}_D(\Omega)$ is the subspace of functions of $C^{2,\alpha}(\Omega)$ satisfying Dirichlet boundary conditions. The proof follows a few steps, which follow from the inverse function theory and the characterization of fold points.

\begin{enumerate}
\item{ The critical set $C \subset X$ is  a hypersurface; every critical point is a fold.}
    \item{ $F$ is proper, its restriction to $C$ is injective and $F^{-1}(F(C)) = C$.}
        \item{The spaces $X - C$ and $Y - F(C)$ have two components. Each component of $X - C$ is taken injectively to the same component of $Y- F(C)$.}
\end{enumerate}

\subsection{A global Lyapunov-Schmidt decomposition}

Berger and Podolak  came up with a different approach \cite{BP}, which is easier to phrase for $F: X \to Y$ between Sobolev spaces,  $X = \Hzeroone[\Omega]$ and $Y = \Hnegone[\Omega] \simeq \Hzeroone[\Omega]$. Their main result is the construction of a global Lyapunov-Schmidt decomposition for $F$. Let $\varphi_1$ be the (positively normalized) eigenfunction associated to the ground state $\lambda_1$,  set $\Vp = \Vd = \ \langle \varphi_1 \rangle$
and consider the orthogonal decompositions
\[ X = \Vp \oplus \Wp \ \hbox{  and  } \ Y = \Vd \oplus \Wd\]
into {\it vertical} and {\it horizontal} subspaces. With a different vocabulary, their proof essentially goes through the verification of the following properties, by making use of spectral estimates on the Jacobians $DF(u): X \to Y$.
\begin{enumerate}
\item{Horizontal affine subspaces of $X$ are taken by $F$ to {\it sheets}.}
\item{ The inverse under $F$ of vertical affine subspaces of $Y$ are {\it fibers}.}
\item {Sheets are {\it essentially flat}, fibers are {\it essentially steep}.}
\end{enumerate}

An affine horizontal subspace of $X$ is a set of the form $x + \Wp$, for a fixed $x \in X$; an affine vertical subspace of $Y$ is of the form $y + \Vd$, for $y \in Y$.
Sheets are graphs of smooth functions from $\Wd$ to $\Vd$ and fibers are graphs of smooth functions from $\Vp$ to $\Wp$. The third property states that the inclination of the tangent spaces to sheets,
with respect to horizontal subspaces, and to fibers, with respect to vertical subspaces, is uniformly bounded from above.

In the words of \cite{CT}, $F$ is a {\it flat} map. In the Ambrosetti-Prodi case, vertical spaces and fibers are one dimensional. More generally, the dimension equals the number $k$ of eigenvalues of $- \Delta_D$ in the set $\overline{f'(\RR)}$ (we suppose non-resonance, i.e., the asymptotic values of  $\Ran f'$ are not eigenvalues) and similar results  still hold.

\subsection{Sheets and fibers allow for weaker hypothesis}

There is an interesting bonus obtained from considering this global Lyapunov-Schmidt decomposition. For many nonlinearities $f$, the related nonlinear map
\[F:X \to Y, \quad F(u) = - \Delta u - f(u)\]
is not proper, and part of the technology related to degree theory simply breaks down. The Ambrosetti-Prodi hypothesis yield properness of $F$, but it is not really essential to most of what we want to do. The first section of the paper is dedicated to explicit examples of Lipschitz  nonlinearities for which properness does not occur --- there are functions $g \in Y$ for which $F^{-1}(g)$ contains a full half-line of functions in $X$. From such examples, we may obtain smooth nonlinearities with the same property, but we give no details.

The results in the paper indicate that in most directions, properness holds. What we mean by this is something similar to the fact that even when a  map $F$ does not have an invertible Jacobian $DF(u)$ at a point $u$, it  may still have a subspace $L(u)$ on which the restriction of $DF(u)$ acts injectively --- this is how bifurcation equations come up: they concentrate in a possibly small subspace $S(u)$ transversal to $L(u)$ the difficulties which are not resolved by the linearization at $u$. What we shall see is that the domain $X$ of $F$ splits into special (nonlinear) surfaces of finite dimension $k$, on which properness may break down, which have for tangent spaces the subspaces $S(u)$ when $u$ is critical, but on transversal directions to these surfaces, the horizontal affine subspaces, properness is always present. As we shall see, counting or computing solutions of the differential equation $F(u) = g$ boils down to a finite dimensional problem, simplifying both (abstract) analysis and numerics.

\subsection{A related numerical algorithm to  solve the PDE $F(u)=g$}

Smiley and Chun (\cite{S},\cite{SC}) showed that an analogous global Lyapunov-Schmidt decomposition exists for appropriate non-autonomous nonlinearities $f(x,u(x))$ and emphasized its relevance for numerical analysis: one might solve $F(u)=g$ by restricting $F$ to the fiber $\alpha_g$ containing $F^{-1}(g)$.
In \cite{CT}, this project is accomplished: given a right hand side $g$, one first obtains numerically a point in $\alpha_g$, which in the Ambrosetti-Prodi case is a curve, and then proceeds to search for solutions by moving along it. The algorithm is sufficiently robust to handle more flexible nonlinearities: it does not require convexity  of $f$ or properness of $F$, and the range of $f'$ may include other (finite) sets of eigenvalues of $- \Delta_D$.

\subsection{Lipschitz nonlinearities}

Now, what happens when $f$ is not smooth anymore, but, say, Lipschitz? In particular, this is the scenario considered in the proof of the so called one dimensional Lazer-McKenna conjecture (\cite{LM1}, \cite{LM2}) by Costa, Figueiredo and Srikanth in \cite{CFS}.
We state the result, for the reader's convenience. Let $X=H^2_D([0,\pi])$ be the Sobolev space of functions satisfying
Dirichlet boundary conditions  with square integrable second derivatives. Recall that  $u \mapsto -u''$ acting on $X$ to $Y = L^2([0,\pi])$ has eigenvalues  $\lambda_k = k^2$, $k=1,2,\ldots$ with corresponding eigenfunctions $\sin (kx)$. Take $f: \RR \to \RR$, a strictly convex smooth function $f$  with asymptotic values $a$ and $b$ for its derivative $f'$ satisfying
\[ a < 1 , \quad \lambda_k = k^2 < b < (k+1)^2 = \lambda_{k+1}.\]
Then the equation $F(u) = -u'' - f(u) = -t \sin x$, $u(0) = u(\pi)=0$, has exactly $2k$ solutions for $t>0$ sufficiently large.

The argument in \cite{CFS} considers the nonlinearity $\tf$ given by $\tilde{f'}(x) = a \hbox{ or } b$, depending if $x <0$ or $x>0$. The related operator $\tF(u) = -u'' - \tf(u)$ now requires some care: it stops being differentiable everywhere and the usual differentiable  normal forms at regular points and folds break down.

In this paper, we show that when $f$ is merely Lipschitz, for appropriate conditions on the boundary of $\Omega$, the operator $F$ is still flat, in the sense that the global Lyapunov-Schmidt decomposition still holds, and sheets and fibers are still available as graphs of Lipschitz functions. A word of caution: piecewise linear nonlinearities may yield continua of points on which the map $F$ takes a unique value. Such sets necessarily lie in a single fiber. More, the numerical analysis of the PDE $F(u) = g$ presented in \cite{CT} is still valid, after minor modifications.

We take this material to be an intermediate step towards a more geometric description of the operators of Hamilton-Jacobi-Bellman type, as studied by Felmer, Quaas and Sirakov in \cite{FQS}.

The authors gratefully acknowledge support from CAPES, CNPq and FAPERJ.

\section{Some cautionary examples}

For a box $\Omega \subset \RR^n$, we consider again the differential equation
\begin{equation}
 F(u) =  - \lap u - f(u) = g, \quad u|_{\partial \Omega} = 0
\end{equation}
with a Lipschitz nonlinearity $f(x)$. Once $f$ is  piecewise linear, there may be whole (straight line) segments on the domain restricted to which $F$ is actually constant. This happens already for the one-dimensional case. Let $\varphi_1$ be the (positive) ground state associated to eigenvalue $\lambda_1$ and take $a < \lambda_1 < b $. Suppose that $\Ran \ \varphi_1 = [0, M]$ and define $f(x)$ to be continuous, with derivatives equal to $a$, $\lambda_1$ and $b$ in the intervals $[-\infty, 0],
[0, M]$ and $[M, \infty]$: clearly, $F(t \varphi_1) = 0$, for $t \in [0,1]$.

\subsection{Nonlinearities $f$ with derivatives taking two values, $n=1$}
In a similar vein, we now provide examples of segments on which $F$ is constant for the nonlinearity $f(x) = ax $ or $bx $, for $x <0$ or $x>0$, with the property that, for special values of $a$ and $b$,  there are right hand sides
$g$ ($g \equiv 0$ is an example) with the property that $F^{-1}(g)$ contains a (straight) half-line of solutions. In particular, the map $F: X = H^2(\Omega) \cap H^1_0(\Omega) \to L^2(\Omega)$ is not proper and the equation has solutions which are not isolated.

\begin{figure}[ht]
\begin{center}
\epsfig{height=30mm,file=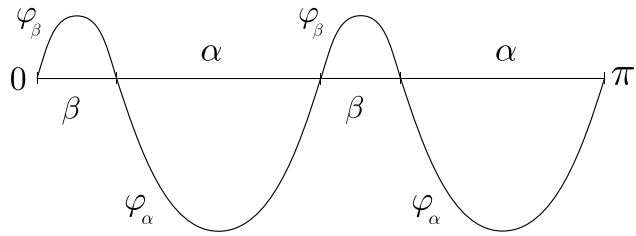}
\end{center}
\caption{\capsize A half-line of solutions}
\label{fig:wil}
\end{figure}

We begin with the one dimensional case, $n=1$.  Set $\Omega = I = [0,\pi]$. Split $I$ into $k$ closed intervals $I_i, i=1,..,k$ joined at their ends, of two different lengths, $I_{odd \ i} = \beta$, $I_{even \ i} = \alpha$. In the figure, $k=4$. The smallest eigenvalues for the operator $u \mapsto -u''$ with Dirichlet conditions in an interval of sizes $\beta$ and $\alpha$ are respectively
\[\lambda_\beta = \big( \frac{\pi}{\beta} \big)^2 \quad \hbox{ and } \quad
\lambda_\alpha = \big( \frac{\pi}{\alpha} \big)^2 \ ,\]
with positive (normalized) eigenfunctions $\varphi_\beta$ and $\varphi_\alpha$.
We set $b = \lambda_\beta$ and $a= \lambda_\alpha$ and construct a solution $\psi$ by juxtaposing multiples of $\varphi_\beta$ and $\varphi_\alpha$ as shown in the figure. On $I_1$, one may take $p \varphi_\beta$, for arbitrary $p >0$. On $I_2$, the (negative) multiple of $\varphi_\alpha$ is determined by matching the first derivative --- recall that $\psi \in X = H^2(I) \cup H^1_0(I)$, so $\psi'$ is absolutely continuous. The procedure extends to the remaining intervals in a unique fashion. We have to make sure that the total length of the intervals $I_i$ equals $\pi$.
Thus, for example, in the simplest case $k=2$, we must have
\[ \beta + \alpha = \pi \quad \Longleftrightarrow \frac{1}{\sqrt{b}} + \frac{1}{\sqrt{a}} = 1.\]
For any value $a \in (1, 4)$, there is a (unique) $b$, which turns out to be in $(4, \infty)$ which solves this equation. Said differently: any interval $[a,b]$ containing $\lambda_2 = 4$ for which $a,b$ are not eigenvalues of the free problem admits a half-line of solutions of the equation above. For different numbers of intervals, one shows half-lines of solutions for any $a \in (\lambda_k, \lambda_{k+1})$ and appropriate $b \in (\lambda_{k+1}, \infty)$.

Alas, the only situation for which this argument does not provide a half-line of solutions $F^{-1}(0)$ is $a < \lambda_1$, the Ambrosetti-Prodi case. There are strong evidences that in this case there are no continua of solutions $F^{-1}(g)$, but we have no real proof.

Clearly, one may replace $g\equiv 0$ by any $g$ defined piecewise on intervals $I_i$ as functions in the range of $u \mapsto -u'' - f(u)$ (Dirichlet conditions on $I_i$) acting on positive functions  restricted to $I_i$. Thus, for $k$ intervals, the set of such $g$ is a vector subspace of $L^2(I)$ of codimension $k$. This construction ascertains that $g$ is in the range of $F$ (now considered in the full interval $I$), so that $g = F(u_0)$. By linearity on each interval $I_i$, adding an homogeneous solution $\psi \in F^{-1}(0)$ gives rise to $u_0 + \psi \in F^{-1}(g)$.

These ideas also suffice to prove that there is no nontrivial function in $[0,\pi]$ which is taken to $0$ if $a < \lambda_1 < b$.

\subsection{The case of arbitrary $n$}

We now consider the case $n=2$, the general situation being similar. We now have $\Omega = I \times I$ (rectangles would work also): just separate variables and proceed. Let $\psi(y)$ as before, solving $-\psi_{yy} - f(\psi)=0$, where $f$ is constructed from appropriate $a$ and $b$, and let $\varphi(x) \geq 0$ be the ground state for Dirichlet conditions on $I$, so that $-\varphi_{xx} = \varphi(x)$. The product
$\tilde{\psi}(x,y)= \varphi(x) \psi(y)$ satisfies
\[ - \tilde{\psi}_{xx} - \tilde{\psi}_{yy} - f(\tilde{\psi}) =
\tilde{\psi} + \varphi(x) (- \psi_{yy} - f(\psi)) = \tilde{\psi}\]
so that $\tilde{\psi}$ and its positive multiples solve
\[ -u_{xx} - u_{yy} - \tilde{f}(u) = 0, \quad u|_{\partial \Omega} = 0,\]
for $\tilde{f}(x) = f(x) +x$.

\section{Geometry of Lipschitz maps}

Set $Y=L^{2}(\Omega)$ with inner product $\langle u, v \rangle_0 = \int_\Omega uv$ and norm $\| u \|_0$. Also let $X=H^2(\Omega)\cap H^1_0(\Omega)$, with inner product $\langle u,v \rangle = \langle -\Delta u \ ,\ -\Delta v\rangle_0$
and norm $\|u\|_2$.

We always consider sets $\Omega$ for which $- \Delta: X \subset Y \to Y$ is a self-adjoint isomorphism --- we call such domains $\Omega$ {\it appropriate}. Also the same operator should have $C_0^\infty(\Omega)$ as a core, i.e. it is essentially self-adjoint in this domain.
From the spectral theorem, there is an orthonormal basis of (Dirichlet) eigenfunctions $\varphi_i \in X, \ \|\varphi_i\|_0 = 1$, satisfying $-\Delta \varphi_i = \lambda_i \varphi_i$. Eigenfunctions associated to different eigenvalues are orthogonal with respect to both inner products.
Concretely, one might take $\Omega$ to be a convex set or require $\partial \Omega$ to be $C^{1,1}$ (\cite{S},\cite{G}).
 Notice that, from standard results in spectral theory, operators
 \[ T : X \subset Y \to Y, \quad T u = -\Delta u - q u,\] for bounded real potentials $q$, are still self-adjoint with an orthonormal basis of eigenfunctions.

We assume that the nonlinearity $f: \RR \to \RR$ is Lipschitz,  and $f'$ takes values in an interval $[a,b]$ with the property that the bounds $a$ and $b$ are not eigenvalues $\lambda_i$. For this part of the paper, we make no assumptions about convexity for $f$. Notice that $a$ and $b$  do not have to be the asymptotic values of $f'$, a degree of freedom which is convenient for numerical analysis.

For starters, $F: X \to Y$ given by $F(u) = - \Delta u - f(u)$ is a well defined map --- it suffices to check that $f(u)  \in Y$. This follows from the easy lemma below.

\begin{lem}
Say $f: \RR \to \RR$ is Lipschitz. Then the map
$\hat{f}: Y=L^2(\Omega) \to Y$ given by $\hat{f}(u) = f \circ u $ is also well defined and Lipschitz with the same constant.
\end{lem}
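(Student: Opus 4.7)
The plan is to reduce everything to the pointwise Lipschitz inequality and then integrate. Let $L$ denote the Lipschitz constant of $f$, so that $|f(s) - f(t)| \le L |s - t|$ for all $s, t \in \RR$.

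First I would check that $\hat f(u) \in Y$ for every $u \in Y$. Since $f$ is Lipschitz, it is continuous, so $f \circ u$ is measurable whenever $u$ is. For the square integrability, I would apply the Lipschitz bound at $t = 0$ to get $|f(s)| \le |f(0)| + L |s|$, square, use $(p+q)^2 \le 2 p^2 + 2 q^2$, and integrate over $\Omega$:
\[
\|\hat f(u)\|_0^2 \;=\; \int_\Omega |f(u(x))|^2 \, dx \;\le\; 2 |f(0)|^2 |\Omega| + 2 L^2 \|u\|_0^2,
\]
which is finite because $\Omega$ is bounded.

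Second, for the Lipschitz estimate, I would apply the pointwise bound $|f(u(x)) - f(v(x))| \le L |u(x) - v(x)|$ for a.e.\ $x \in \Omega$, square, and integrate:
\[
\|\hat f(u) - \hat f(v)\|_0^2 \;=\; \int_\Omega |f(u(x)) - f(v(x))|^2 \, dx \;\le\; L^2 \int_\Omega |u(x) - v(x)|^2 \, dx \;=\; L^2 \|u - v\|_0^2,
\]
so $\|\hat f(u) - \hat f(v)\|_0 \le L \|u - v\|_0$. There is no real obstacle here; the only mild subtlety is justifying measurability of $f \circ u$ when $u$ is only defined a.e., which follows immediately from continuity of $f$ by choosing any measurable representative of $u$.
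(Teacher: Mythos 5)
Your proof is correct and actually streamlines the paper's argument. The paper takes a small detour: it first assumes $u,v$ continuous, invokes absolute continuity of $f$ to write $f$ as an integral of $f'$ via the fundamental theorem of calculus, derives the two estimates in that setting, and then passes to general $u,v\in L^2$ by a density argument (Cauchy sequences of continuous functions). You instead apply the pointwise Lipschitz inequality $|f(s)-f(t)|\le L|s-t|$ directly at a.e.\ $x\in\Omega$, square, and integrate; and you dispose of the measurability issue by noting that $f$ continuous composed with a measurable representative of $u$ is measurable. This avoids both the appeal to $f'$ (which only exists a.e.) and the approximation-by-continuous-functions step, so it is both shorter and slightly more self-contained. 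What the paper's route buys, if anything, is a setup that foreshadows the $\int_0^1 f'(\cdot)\,dt$ manipulations used later in the paper (e.g.\ in the proof of Theorem~1); for the lemma itself, your direct argument is cleaner and equally rigorous.
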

\begin{proof} Take first $u, v$ continuous functions.
Since $f$ is $M$-Lipschitz, it is absolutely continuous, so that
\[
|f(u(x))|
 =|f(0)+u(x)\int^1_0{f'(tu(x))dt} \ |
 \leq |f(0)|+M|u(x)|,   x\in \Omega,
\]
and, since $\Omega$ is bounded, we have $f(u)\in Y$. Similarly,
applying the fundamental theorem of calculus to the function
$\varphi(t) = f(tu(x) + (1-t)v(x))$, one obtains
\[ \| f(u(x)) - f(v(x)) \|_0 \le \int_0^1 |f'(tu(x) + (1-t) v(x))| dt \ \|u(x) - v(x)\|_0.\]
Now take Cauchy sequences of continuous functions converging to arbitrary functions in $Y$: the estimates above extend to the required $L^2$ estimates.
\end{proof}
\qed

\subsection{The main result: $F_v:W_X \to W_X$ is an isomorphism}

We now describe an orthogonal decomposition of $X$ and $Y$. Take $\Omega \subset \R^n$ to be a bounded appropriate domain and let $\Lambda_f = \{ \lambda_i\}_{i \in I}$ be the set of eigenvalues $\lambda_i$ in $(a,b)$. The
{\it vertical subspaces}
$ V_X =V_Y$  equal the invariant subspace associated to $\Lambda_f $ and $V_X \subset X, V_Y \subset Y$.
The {\it horizontal} subspaces are
$W_{X} =V_X^{\perp}\subset X $ and $
W_{Y} =V_Y^{\perp}\subset Y$
where orthogonality takes into account the (different) inner products in $X$ and $Y$. These induce orthogonal decompositions
$X = W_X \oplus V_X, \quad Y = W_Y \oplus V_Y $ and corresponding orthogonal projections $P_Y$ and $Q_Y$ from $Y$ to $W_Y$ and $V_Y$. Finally, we consider {\it affine horizontal subspaces} in $X$, which are sets of the form $x + W_X$, for a fixed $x$, and {\it affine vertical subspaces} in $Y$, of the form $y + V_Y$.

We need a label for this construction:  a nonlinearity $f$ induces an {\it $I$-decomposition} $X = W_X \oplus V_X, \ Y = W_Y \oplus V_Y $ associated to bounds $a$ and $b$.

\begin{teo}\label{Fvv} Let $\Omega$ be an appropriate domain, $f: \RR \to \RR$ Lipschitz,  $\Ran f' \subset [a,b]$, where $a$ and $b$ are not eigenvalues $\lambda_i$.
 and  $X = W_X \oplus V_X, \ Y = W_Y \oplus V_Y $ be the $I$-decomposition specified above. For $v \in V_X$, let $F_v: W_X \to W_Y$ be the horizontal projection of the restriction  of $F$ to the affine subspace $v+ W_X$,
 $F_v(w) = P_Y F(w+v)$. Then $F_v$ is a bi-Lipschitz homeomorphism. The Lipschitz constants for $F_v$ and $F_v^{-1}$  are independent of $v$.
\end{teo}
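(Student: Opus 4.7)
Given $w_1, w_2 \in \Wp$, set $\eta = w_1 - w_2$. Since $f$ is Lipschitz, hence absolutely continuous, Newton--Leibniz yields
\[f(w_1+v)-f(w_2+v) = Q\,\eta, \qquad Q(x) := \int_0^1 f'\bigl(t(w_1+v)(x) + (1-t)(w_2+v)(x)\bigr)\,dt,\]
a measurable function with $a \le Q(x) \le b$ a.e. Because $-\Delta\eta \in \Wd$,
\[F_v(w_1) - F_v(w_2) = -\Delta\eta - \Pd(Q\eta) = \Pd L_Q\,\eta, \qquad L_Q := -\Delta - Q.\]
The theorem then reduces to a uniform linear estimate: constants $c_1, c_2 > 0$, depending only on $a$, $b$ and the spectrum of $-\Delta$ (not on $Q$ or $v$), such that $c_1 \|\eta\|_2 \le \|\Pd L_Q \eta\|_0 \le c_2 \|\eta\|_2$ for all $\eta \in \Wp$ and all $Q \in L^\infty(\Omega)$ with $a \le Q \le b$.

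\textbf{Proof of the linear estimate.} Split $\Wp = \Wp^+ \oplus \Wp^-$, where $\Wp^\pm$ is the closed span of $\{\varphi_i : \lambda_i > b\}$ respectively $\{\varphi_i : \lambda_i < a\}$, and let $\mu^+ = \min\{\lambda_i : \lambda_i > b\}$, $\mu^- = \max\{\lambda_i : \lambda_i < a\}$ (set $a-\mu^- = +\infty$ if $\Wp^- = \{0\}$). By hypothesis on $a, b$ both gaps $\mu^+ - b$ and $a - \mu^-$ are strictly positive. For $\eta = \eta^+ + \eta^-\in\Wp$, I test $\Pd L_Q \eta$ against $\eta^+ - \eta^- \in \Wp \subset \Wd$:
\begin{align*}
\langle \Pd L_Q\eta,\ \eta^+ - \eta^-\rangle_0
&= \langle -\Delta\eta,\ \eta^+ - \eta^-\rangle_0 - \int_\Omega Q\bigl((\eta^+)^2 - (\eta^-)^2\bigr)\\
&\ge (\mu^+ - b)\|\eta^+\|_0^2 + (a - \mu^-)\|\eta^-\|_0^2 \ \ge \ c_0 \|\eta\|_0^2,
\end{align*}
with $c_0 = \min(\mu^+ - b, a - \mu^-)>0$. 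Here I used that $-\Delta$ is diagonal on $\Wp^\pm$, while $Q \le b$ acts on $(\eta^+)^2$ and $Q \ge a$ on $(\eta^-)^2$ in the favourable direction. Since $\|\eta^+ - \eta^-\|_0 = \|\eta\|_0$, Cauchy--Schwarz gives $\|\Pd L_Q\eta\|_0 \ge c_0 \|\eta\|_0$. Bootstrapping via the triangle inequality,
\[\|\eta\|_2 = \|-\Delta\eta\|_0 \le \|\Pd L_Q\eta\|_0 + \|Q\eta\|_0 \le (1 + M/c_0)\|\Pd L_Q\eta\|_0,\]
where $M$ is the Lipschitz constant of $f$. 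The upper bound $\|\Pd L_Q\eta\|_0 \le (1 + M\lambda_1^{-1})\|\eta\|_2$ is immediate.

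\textbf{From the linear estimate to bijectivity.} The bound now gives that $F_v$ is bi-Lipschitz onto its image, which is therefore closed in $\Wd$, with constants depending only on $a$, $b$, $\mu^\pm$, $\lambda_1$, $M$ --- in particular not on $v$. For surjectivity I would run a continuation: fix $c^* \in [a,b]$ outside $\spectrum{-\Delta}$ and interpolate $f_s(x) := (1-s)c^* x + s f(x)$, $s \in [0,1]$, so that $f_s'\in [a,b]$ uniformly in $s$ and the homotopy $F_v^s(w) := \Pd[-\Delta(w+v) - f_s(w+v)]$ inherits the same bi-Lipschitz bound. At $s=0$ one has $F_v^0 = (-\Delta - c^*)|_{\Wp}$, a linear Hilbert isomorphism $\Wp \to \Wd$ (the $\Vp$-parts cancel because $\Vp = \Vd$ is $(-\Delta-c^*)$-invariant and $\Pd$ annihilates $\Vd$). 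Writing $F_v^s = G + N_s$ with $G = (-\Delta - c^*)|_{\Wp}$ and $N_s(w) = -\Pd[f_s(w+v) - c^* (w+v)]$, Rellich's compact embedding $H^2(\Omega) \hookrightarrow L^2(\Omega)$ makes $N_s: \Wp \to \Wd$ compact, so $G^{-1}F_v^s = I + G^{-1}N_s$ is of Leray--Schauder form on $\Wp$. The uniform bi-Lipschitz bound provides the a priori estimate that zeros of $F_v^s(\cdot) - g$ remain in a fixed ball as $s$ varies, so the degree is homotopy-invariant, equals $1$ at $s = 0$, and consequently $F_v = F_v^1$ is onto.

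\textbf{Main obstacle.} The heart of the argument is the linear estimate for $\Pd L_Q|_{\Wp}$, which must hold for an arbitrary $L^\infty$ coefficient $Q \in [a,b]$ rather than a smooth or constant one; without the test vector $\eta^+ - \eta^-$, the sign-indefiniteness of $L_Q$ on the full space $\Wp$ looks like a serious obstruction. Once the estimate is secured, injectivity, closed image, and uniformity in $v$ are automatic, and surjectivity is a standard Leray--Schauder continuation resting on the Rellich compactness guaranteed by $\Omega$ being \emph{appropriate}.
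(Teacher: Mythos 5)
Your proof is correct, but it follows a genuinely different route from the paper. The paper's argument is a one-step contraction: it shifts by $\gamma=(a+b)/2$, factors $F_v\circ T^{-1}=I-K_v$ with $T=-\Delta-\gamma$, and observes that $K_v$ is a contraction with constant $c=(b-\gamma)/|\lambda_m-\gamma|<1$; the Banach fixed-point theorem then hands over surjectivity, injectivity and the uniform Lipschitz bound for the inverse all at once, with no compactness and no degree theory. Your argument instead splits the work: the linearization $f(w_1+v)-f(w_2+v)=Q\eta$ with $a\le Q\le b$ is a nice reduction, and the coercivity estimate obtained by testing $P_Y L_Q\eta$ against $\eta^+-\eta^-$ is a legitimate, more ``variational'' alternative to the spectral-shift argument --- it cleanly isolates the role of the two spectral gaps $\mu^+-b$ and $a-\mu^-$ rather than packaging them into the midpoint $\gamma$. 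The price is that coercivity alone does not give surjectivity in infinite dimensions, so you must run a separate Leray--Schauder continuation that invokes Rellich compactness (justified here since $\Omega$ is bounded and $W_X\subset H^2\cap H^1_0$). This makes your proof heavier in machinery but arguably more transparent about where the non-resonance of $a,b$ enters; the paper's proof is shorter and entirely elementary. One small point worth tightening: in your bootstrap $\|Q\eta\|_0\le\max(|a|,|b|)\|\eta\|_0$ is the natural bound, rather than $M\|\eta\|_0$, though either gives a uniform constant.
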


\begin{proof}
For $\gamma=(a+b)/2 $, set $ \tilde{f}(x)=f(x) - \gamma x$. Then   $T : W_X \to W_Y$ given by $u\to -\Delta u - \gamma u$ is  well defined and invertible, with  eigenvalues $\lambda_j - \gamma$ with $j \notin I$. Let $\lambda_m - \gamma$ be the eigenvalue of $T$ of smallest absolute value:
clearly,
\[|| T^{-1}|| = |\lambda_m - \gamma|^{-1} > | a - \gamma | =  b - \gamma. \]
For $u = w + v, w \in W_X, v \in V_X$, we have
\[
F_v(w)  =P_Y[-\Delta(w+v)-f(w+v)]
 = T w -P_Y \tilde{f}(w+v).
\]
The composition
$F_v\circ T^{-1}: W_Y \to W_Y$ is of the form $I-K_v$, where $K_v(w)=P_Y\tilde{f}(T^{-1}w+v)$. We show that $K_v: W_Y \to W_Y$ is a contraction with constant uniformly bounded away from 1.  For $w, \tilde{w} \in W_Y \subset L^2(\Omega)$,
\[ \|K_v(w) - K_v(\tilde{w})\|_0   \leq \  \|\tilde{f}(T^{-1}w+v)-\tilde{f}(T^{-1}\tilde{w}+v)\|_0 \]
\[\leq  (b-\gamma) \|T^{-1}(w-\tilde{w})\|_0
 \leq \  \frac{b-\gamma}{|\lambda_m-\gamma|} \ \|w-\tilde{w}\|_0 = c \ \|w-\tilde{w}\|_0. \]
Since $b$ is not an eigenvalue $\lambda_j$, the Lipschitz constant $c$ is uniformly bounded away from 1. From the Banach contraction theorem, $F_v: W_Y \to W_Y$ is a homeomorphism and standard estimates using the familiar formula
\[ (I - K_v)^{-1} = I + K_v + K_v \circ K_v + K_v \circ K_v \circ K_v + \ldots  \]
show that $(F_v)^{-1}$ is Lipschitz, where the constant depends on $c$ and not on $v$.
\end{proof}
\qed

In particular, if $[a,b]$ does not contain eigenvalues $\lambda_i$, the result above recovers the Dolph-Hammerstein theorem  for Lipschitz nonlinearities (\cite{D}, \cite{H}).

\bigskip

\subsection{Sheets and fibers make sense for Lipschitz nonlinearities}

We are ready to extend to the Lipschitz context  the global Lyapunov-Schmidt decomposition which is known for the smooth case, from the works of Berger and Podolak and Smiley. Consider the following diagram.
\[
\begin{array}{ccl}
{X = W_X \oplus V_X}&
\stackrel{{\scriptstyle F}}{\longrightarrow}&
{Y = W_Y \oplus V_Y} \\
   {\scriptstyle \Phi^{-1}=( F_v, Id)}\searrow &
      &
\nearrow{\scriptstyle \tilde{F}=F\circ\Phi=(Id, \phi)}\\
    &{Y = W_Y \oplus V_Y}

&  \\
  \end{array}
\]

Thus the change of variables $\Phi$ yields $\tilde{F}(w,v) = F \circ \Phi (w,v) = (w, \phi(w,v))$, from which we will derive some convenient geometric properties. We first clarify a technicality: $\Phi$ is indeed a global change of variables in the Lipschitz category. In $W_X \oplus V_X $, we use the norm obtained by adding the norms in each coordinate.

\begin{figure}[ht]
\begin{center}
\epsfig{height=50mm,file=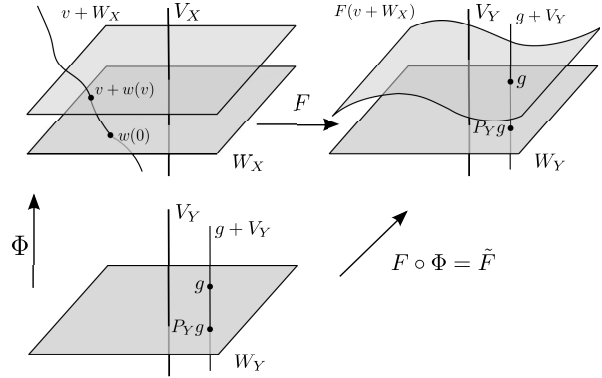}
\end{center}
\caption{\capsize The change of variables}
\label{fig:wil}
\end{figure}


\begin{prop} The map $\Phi = ((F_v)^{-1}, Id):  Y=W_Y \oplus V_Y \to X=W_X \oplus V_X$ is a bi-Lipschitz homeomorphism.
\end{prop}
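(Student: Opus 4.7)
My plan is to show that $\Phi$ is a bijection with both $\Phi$ and $\Phi^{-1}$ Lipschitz, proving the two directions separately. Bijectivity is immediate: by Theorem \ref{Fvv}, for each fixed $v \in V_X$ the map $F_v : W_X \to W_Y$ is a homeomorphism, so $(w,v) \mapsto ((F_v)^{-1}(w), v)$ and its set-theoretic inverse $(w',v) \mapsto (F_v(w'), v)$ are well defined and mutually inverse. Since $V_X = V_Y$ is finite dimensional, the norms it inherits from $X$ and $Y$ are equivalent, so the identity on the $V$-component contributes nothing problematic, and it suffices to control $(F_v)^{-1}(w)$ and $F_v(w)$ jointly in $(w,v)$.

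The direction $\Phi^{-1}$ is straightforward. Writing $F_v(w) = P_Y F(w+v) = -\Delta w - P_Y f(w+v)$ (using $-\Delta v \in V_Y$), one has
\[ F_{v_1}(w_1) - F_{v_2}(w_2) = -\Delta(w_1 - w_2) - P_Y\bigl[f(w_1 + v_1) - f(w_2 + v_2)\bigr]. \]
The $Y$-norm of the first term equals $\|w_1-w_2\|_X$; the second is controlled by the preceding lemma (Lipschitz constant of $\hat f$) together with the continuous embedding $X \hookrightarrow L^2(\Omega)$, yielding a bound of the form $C(\|w_1-w_2\|_X + \|v_1-v_2\|_X)$.

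For $\Phi$ itself, I recycle the fixed-point apparatus used in the proof of Theorem \ref{Fvv}. With $\tilde f(x)=f(x)-\gamma x$ and $T : W_X \to W_Y$ the invertible operator $u \mapsto -\Delta u - \gamma u$, one has $(F_v)^{-1}(w) = T^{-1}\xi_v(w)$, where $\xi_v(w)$ is the unique fixed point of $y \mapsto w + K_v(y)$, with $K_v(y) = P_Y \tilde f(T^{-1}y + v)$. The map $K_v$ is a $c$-contraction on $L^2$, with $c = (b-\gamma)/|\lambda_m-\gamma| < 1$ uniformly in $v$; since $\tilde f$ is $(b-\gamma)$-Lipschitz, $K_v(y)$ also depends in a Lipschitz fashion on $v$, with constant $(b-\gamma)$ uniformly in $y$. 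The standard continuous-dependence-on-parameters estimate for contractions then gives
\[ \|\xi_v(w) - \xi_{v'}(w')\|_0 \leq \frac{1}{1-c}\bigl(\|w-w'\|_0 + (b-\gamma)\|v-v'\|_0\bigr). \]

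The actual technicality is the mismatch between the $L^2$-contraction used to produce $\xi_v(w)$ and the $H^2$-type norm on $X$ in which we need the conclusion. This is resolved by showing that $T^{-1}: (W_Y, \|\cdot\|_0) \to (W_X, \|\cdot\|_X)$ is a bounded linear operator. Via the spectral decomposition, $\|T^{-1}w\|_X^2 = \sum_{j\notin I}\bigl(\lambda_j/(\lambda_j-\gamma)\bigr)^2 |\langle w,\varphi_j\rangle_0|^2$, and the symbols $\lambda_j/(\lambda_j-\gamma)$ are uniformly bounded in $j \notin I$ because $|\lambda_j - \gamma|$ is bounded away from zero (as $a$ and $b$ are not eigenvalues) and the ratio tends to $1$ as $\lambda_j \to \infty$. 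Hence $\|T^{-1}w\|_X \leq C\|w\|_0$, and composing this with the Lipschitz estimate for $\xi_v$ yields Lipschitz continuity of $\Phi$, completing the proof.
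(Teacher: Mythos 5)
Your proof is correct and takes essentially the same route as the paper's: reduce to the contraction-mapping machinery from Theorem~\ref{Fvv} and compare fixed points of $y \mapsto w + K_v(y)$. You in fact supply more detail than the paper does — an explicit continuous-dependence-on-parameter bound for the fixed point $\xi_v(w)$, and the verification that $T^{-1}:(W_Y,\|\cdot\|_0)\to(W_X,\|\cdot\|_X)$ is bounded via the spectral estimate on $\lambda_j/(\lambda_j-\gamma)$ — where the paper's argument performs a triangle-inequality split and dispatches the $v$-dependence with a terse appeal to ``the proof of the previous theorem''.
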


\begin{proof} The invertibility of $\Phi$ follows from the previous theorem. We use some elementary facts. The identity map $Id:(V_X, \|.\|_2) \to (V_Y, \|.\|_0)$ between normed spaces of the same finite dimension is bi-Lipschitz. Also,
since $W_X$ and $V_X$ are orthogonal (in $L^2$ and $H^2$), $\|w\|_2 + \| v\|_2 \leq 2 \| w \pm v \|_2$
for $w \in W_X$ and $v \in V_X$. To show that $\Phi^{-1}$ is Lipschitz, take $w+v, \tilde{w} + \tilde{v} \in X$. For appropriate constants $C$, $\tilde{C}$,

\[ \|\Phi^{-1} (w+v)-\Phi^{-1} (\tilde{w}+\tilde{v})\|_0 = \| F_v(w)-F_{\tilde{v}}(\tilde{w})\|_0 +  \|v-\tilde{v}\|_0\]
\[ \leq  \|-\Delta (w- \tilde{w}) - P_Y(f(w+v)-f(\tilde{w}+\tilde{v}))\|_0 + C \|v-\tilde{v}\|_2\]
\[
\leq  \|w-\tilde{w}\|_2 + \| f(w+v)-f(\tilde{w}+\tilde{v})\|_0 + C \|v-\tilde{v}\|_2  \leq  \tilde{C} \|w+v-(\tilde{w}+\tilde{v})\|_2, \]
where the last inequality follows from Lemma 1.

We obtain a Lipschitz estimate for  $\Phi = ( (F_v)^{-1}, Id): W_Y \oplus V_Y \to W_X \oplus V_X$. Take $z+v, \tilde{z}+ \tilde{v} \in Y = W_Y \oplus V_Y$. Then
\[ \| \Phi(z+ v) - \Phi(\tz + \tv) \|_2 \leq
\| F_z^{-1}(v)  - F_{\tz}^{-1}(v)\|_2 + \|F_{\tz}^{-1}(v)- F_{\tz}^{-1}(\tv)\|_2 + \| v - \tv\|_2\]

Again from finite dimensionality of $V_X= V_Y$, there is an estimate of the form $\| v - \tv\|_2 \leq C \| v - \tv\|_0$. We also have $\|F_{\tz}^{-1}(v)- F_{\tz}^{-1}(\tv)\|_2 \leq C \| v - \tv\|_0$ from the proof of the previous theorem. The first term is handled in a similar fashion.
\end{proof}
\qed

The picture should help putting pieces together. Here, $\dim V_X= \dim V_Y = 1$, as in the  Ambrosetti-Prodi theorem: the convex span of $\Ran f'$ ($f$ is Lipschitz!) contains only the eigenvalue $\lambda_1$. The map $F$ takes an affine horizontal subspace $v + W_X$ to a sheet, and the inverse of the vertical affine subspace $g + V_Y$ is a fiber, which crosses $W_X$ at $w(0)$ and $v + W_X$ at $v + w(v)$, in the notation of the proof above. Clearly, sheet and fiber are graphs, as stated above.
The change of variables $\Phi$ preserves horizontal affine subspaces and $\tilde{F}$ preserves affine vertical subspaces.

We are ready to prove the fundamental geometric property of such $F: X \to Y$: there are uniformly flat sheets and uniformly steep fibers.

\begin{prop} Let $F: X = W_X \oplus V_X \to Y = W_Y \oplus V_Y$ with the hypothesis given in the beginning of the section. The image of each horizontal affine space $v + W_X \subset X$ under $F$ is the graph of a Lipschitz function $\sigma_v: W_Y \to V_Y$. Similarly, the inverse of each vertical affine subspace $g + V_Y \subset Y$ under $F$ is the graph of
a Lipschitz function $\alpha_g : V_X \to W_X$. The Lipschitz constant can be taken to be the same, for all $v \in V_X$, $g \in W_Y$.
\end{prop}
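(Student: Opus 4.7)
Since $V_X$ and $W_X$ are (orthogonal) sums of eigenspaces of $-\Delta$, the splitting $X = W_X \oplus V_X$ is preserved by $-\Delta$, so $P_Y F(w+v) = F_v(w)$ and $Q_Y F(w+v) = -\Delta v - Q_Y f(w+v)$. Theorem~\ref{Fvv} gives $F_v:W_X \to W_Y$ as a bi-Lipschitz bijection with constants independent of $v$; this and Lemma~1 are essentially all that will be used.

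\textbf{Sheets.} For $v \in V_X$, parametrize the sheet $F(v+W_X)$ by $z = F_v(w) \in W_Y$, i.e.\ $w = F_v^{-1}(z)$. Then the image is the graph of
\[ \sigma_v(z) := Q_Y F(F_v^{-1}(z)+v) = -\Delta v - Q_Y f(F_v^{-1}(z)+v). \]
For Lipschitz continuity, $Q_Y$ is $1$-Lipschitz in $\|\cdot\|_0$, Lemma~1 gives that $u\mapsto f(u+v)$ is $M$-Lipschitz on $Y$, Theorem~\ref{Fvv} gives a $v$-uniform Lipschitz estimate $\|F_v^{-1}(z_1)-F_v^{-1}(z_2)\|_2 \le C\|z_1-z_2\|_0$, and the embedding $\|\cdot\|_0 \le \lambda_1^{-1}\|\cdot\|_2$ on $X$ converts $\|\cdot\|_2$ back to $\|\cdot\|_0$. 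Composing yields a Lipschitz constant for $\sigma_v$ depending only on $a$, $b$ and $\Omega$.

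\textbf{Fibers.} An element $u=w+v$ lies in $F^{-1}(g+V_Y)$ iff $P_Y F(u)=g$ iff $F_v(w)=g$, i.e.\ $w = F_v^{-1}(g) =: \alpha_g(v)$; thus the fiber is the graph of $\alpha_g:V_X \to W_X$. The delicate point is that the Lipschitz constant of $\alpha_g$ must be \emph{uniform in $g$}. For this I would revisit the contraction used in Theorem~\ref{Fvv}: with $\gamma=(a+b)/2$, $\tilde f(x)=f(x)-\gamma x$, and $T=-\Delta-\gamma$ on $W_X$, the identity $F_v(w)=g$ reads $Tw = g + P_Y \tilde f(w+v)$. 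For $w_i = \alpha_g(v_i)$, subtracting eliminates $g$:
\[ T(w_1-w_2) = P_Y\bigl(\tilde f(w_1+v_1)-\tilde f(w_2+v_2)\bigr). \]
Taking $\|\cdot\|_0$ norms and using $\|T^{-1}\|_{L^2\to L^2}=|\lambda_m-\gamma|^{-1}$ together with the $(b-\gamma)$-Lipschitz character of $\hat{\tilde f}$ (Lemma~1) gives
\[ \|w_1-w_2\|_0 \le c\bigl(\|w_1-w_2\|_0 + \|v_1-v_2\|_0\bigr), \qquad c=\tfrac{b-\gamma}{|\lambda_m-\gamma|}<1, \]
so $\|w_1-w_2\|_0 \le \tfrac{c}{1-c}\|v_1-v_2\|_0$. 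Bootstrapping back via $\|w\|_2 \le K_T\|Tw\|_0$, where $K_T=\sup_{j\notin I}\lambda_j/|\lambda_j-\gamma|<\infty$, and using equivalence of $\|\cdot\|_0$ and $\|\cdot\|_2$ on the finite-dimensional $V_X$, produces the desired $g$-uniform Lipschitz constant for $\alpha_g$.

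\textbf{Main obstacle.} The sheet half is a direct application of Theorem~\ref{Fvv}; the substance is in the $g$-uniform control of $\alpha_g$, because $v \mapsto F_v^{-1}(g)$ depends on $v$ both through the defining operator and implicitly through a fixed point, so no formal chain-rule argument is available. The trick is the observation that $g$ enters $Tw=g+P_Y\tilde f(w+v)$ only as a $v$-independent additive term; it therefore cancels on subtracting two instances and the contraction constant $c<1$ inherited from Theorem~\ref{Fvv} delivers the bound with no $g$-dependence.
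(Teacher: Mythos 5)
Your proof is correct and follows the same essential route as the paper: identify the fiber as the graph of $v \mapsto F_v^{-1}(g)$, the sheet as a graph via the bijection $F_v$, and let the $v$-uniform bi-Lipschitz estimates of Theorem~\ref{Fvv} do the heavy lifting. The one genuine contribution you make beyond the paper's terse treatment is the explicit $g$-uniform Lipschitz estimate for $\alpha_g$, obtained by subtracting the two instances of $Tw = g + P_Y\tilde f(w+v)$ so that $g$ cancels and the contraction constant $c<1$ delivers $\|w_1-w_2\|_0 \le \tfrac{c}{1-c}\|v_1-v_2\|_0$. The paper instead routes this through the preceding proposition on the bi-Lipschitz change of variables $\Phi$, where the key term (the dependence of $F_v^{-1}(z)$ on $v$ at fixed $z$) is dismissed as ``handled in a similar fashion''; your subtraction argument is precisely the content that remark is alluding to, and writing it out makes the $g$-uniformity transparent rather than asserted. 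You also spell out the sheet case, which the paper declares ``easier'' without detail; your parametrization by $z = F_v(w)$ together with the composition-of-Lipschitz-maps bound (using $\|\cdot\|_0 \le \lambda_1^{-1}\|\cdot\|_2$) is exactly the intended argument.
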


\begin{proof} We prove the result for fibers $F^{-1}(g + V_Y)$:
the statement for sheets $F(v + W_X)$ is easier. Clearly, $\tilde{F}^{-1}( g + V_Y) \subset g + V_Y$, which is taken by the change of variables $\Phi$ to a set of the form $\alpha_g = \{ (F_v)^{-1}(g) + v, v \in V_X \} \subset X$. From the theorem, for every $v \in V_X$, there is a unique $w(v) \in W_X$ for which
$P_Y F(w(v)+v) = g$ --- thus $F^{-1}( g + V_Y) = \{ (F_v)^{-1}(g) + v, v \in V_X \}$. Said differently, $w(v) + v \in \alpha_g$: the set $\{ (w(v), v), v \in V_X \} \subset X$ is the graph of a Lipschitz map.

The uniformity (on $g$) of the Lipschitz constant of the maps $v \mapsto w(v)$  is responsible for the uniform steepness of the fibers.
\end{proof}
\qed

In opposition to the arguments in \cite{BP}, \cite{SC} and \cite{CT} for the smooth case, the geometric statements follow without recourse to implicit function theorems. Notice also that the uniform flatness of sheets and steepness of fibers are a counterpart to (differential) transversality between fibers and horizontal affine spaces in the domain and between sheets and vertical affine spaces in the counterdomain.

The restriction of $F$ to horizontal affine subspaces is injective but the restriction to fibers $\alpha$ is not. In particular, in the standard Ambrosetti-Prodi case, $F$ restricted to each fiber is simply the map $x \in \RR \mapsto -x^2 \in \RR$, after global changes of variables. The theorem becomes evident from this fact, first proved in \cite{BP}.

Vertical lines may be taken by $F$ to the horizontal plane,  indicating yet another relevant transversality property of the fibers. To see this, take $\Omega = [-\pi/2,\pi/2]$ and $F(u) = - u'' - f(u)$, so that $\lambda_2=4$ and the corresponding eigenvector $\varphi_2$ is odd. Set $a=3$ and $b=5$, split $X = W_X \oplus \langle \varphi_2 \rangle$ and take $f(x) = e(x) + 4x$, where $e(x)$ is even (convexity is not necessary!) and $\Ran f' = (a,b)$. By symmetry, we have
\[ \langle \ F(t \varphi_2)\ ,\  \varphi_2 \ \rangle =
\int_\Omega ( - t \,  \Delta \varphi_2 - e(t \varphi_2) - 4 t \, \varphi_2)\  \varphi_2 = 0. \]

\section{Geometry and numerics}

The statements in the previous section are exactly what we need to mimic the numerical algorithms in \cite{CT} for solving $F(u) = g$, i.e., the differential equation (2).
This section emphasizes the points where some alterations are needed, but most details common to the smooth and Lipschitz scenarios, which are provided in \cite{CT}, will not be presented.

\subsection{Finding the right fiber}

To solve $F(u) = g$, first find any point $u = w+v \in \alpha_g$, the fiber associated to $g$. Said differently, find $u$ so that $P_Y F(u) = P_Y g$.
In order to do this, notice that, from the results of the previous section, each fiber $\alpha_g$ intersects each horizontal affine space $v + W_X$ at a single point. Thus each horizontal point $P_Y g \in W_Y$ corresponds to a unique $w(g) \in  W_X$ for which $F_v(w(g)) = P_Y g$.

Said differently, each bi-Lipschitz map $F_v: W_X \to W_Y$ takes a point $w(g)$ in the fiber $\alpha_g$ to a point $P_Y g$, which is the only point in the vertical affine space $g + V_Y$ in the horizontal subspace $W_Y$. In words: there is a bi-Lipschitz map between the set of all fibers (represented by points in $W_X$) to the set of vertical affine spaces (represented by points in $W_Y$).

Now the good (numerical) news: to invert each map $F_v$, simply invert the (bi-Lipschitz) homeomorphism $F_v \circ T^{-1} = I - K$, where $K$ is a contraction, as shown in the proof of Theorem 1! So the approximation of $F_v^{-1}(P_Y G)$ is amenable to standard numerical algorithms. In the differentiable case, we could do somehow better: we could invert by continuation where local steps are given by Newton iterations. In the strictly Lipschitz context, we lose quadratic convergence --- some acceleration techniques are still available, but we provide no details.

\subsection{Moving along a fiber}

Once a point in $\alpha_g$ is identified, we need to learn to walk along the fiber, or more precisely, we have to compute the point in the fiber with a given height $v \in V_X$. This is turn is the main piece of information for a finite dimensional inversion algorithm for the restriction $F: \alpha_g \to g + V_Y$.
This is done by continuation starting from a given point in the fiber $u \in \alpha_g$.

Clearly small perturbations of a point $u_1 \in \alpha_g$ leave the fiber. But the previous algorithm --- more precisely, the inversion of each map $F_v$ --- makes it possible to change $u_1 = w_1 + v_1$ to a point $\tilde{u} = \tilde{w} + v_2$ and then using $\tilde{u}$ as a starting point to solve $F_{v_2}(w_2) = P_Y g$, giving rise to a point $u_2 = w_2 + v_2 \in \alpha_g$.

\subsection{Stability of the decomposition, a numerical necessity}

The uniformity on the flatness of sheets and the steepness of fibers has a relevant consequence for numerics, which has been detailed in \cite{CT}. In a nutshell, whatever algorithm we use requires an approximation for the projection $P_Y: Y \to Y$. This is usually accomplished by computing (approximate) eigenfunctions associated to the eigenvalues of $- \Delta_D$ in the interval $(a,b)$. A concrete possibility is to approximate functions by finite elements. The upshot is that the numerics handles an approximation $\tilde{V}_X=\tilde{V}_Y$ of $V_X=V_Y$ and its orthogonal complement. However, from the uniformities, such spaces, for sufficiently close approximations, still induce global Lyapunov-Schmidt decompositions giving rise to sheets and fibers, for which the algorithms described in the previous paragraphs hold and provide robust approximations to the real answers.

\begin{figure}[ht]
\begin{center}
\epsfig{height=30mm,file=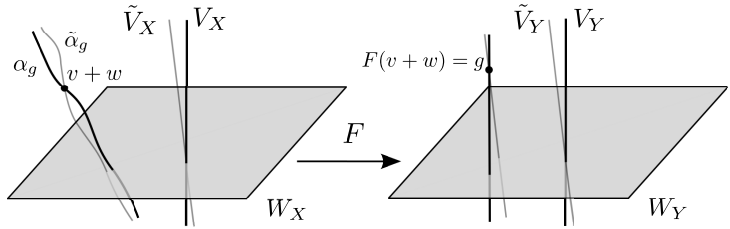}
\end{center}
\caption{\capsize Irrelevant perturbations for numerical purposes}
\label{fig:wil}
\end{figure}

In the figure, we sketch fibers and their perturbations; there is an analogous reasoning for sheets.
In the figure, $\tilde{V}_Y$ changes slightly the affine vertical
subspace through the point $g$, which in turn, when inverted, gives rise to a slightly different fiber $\tilde{\alpha}_g$. Still, $\tilde{\alpha}_g$ is a Lipschitz graph of a function from $\tilde{V}_X$ to $\tilde{W}_X = (\tilde{V}_X)^\perp$. The  (geometric) thing to notice is the fact that it is the steepness of the fibers which ascertain the robustness of the global Lyapunov-Schmidt decomposition.

\end{document}